\let\mathg\mathfrak
\theoremstyle{plain}
\newtheorem{cor}{Corollary}[section]
\newtheorem{lem}[cor]{Lemma}
\newtheorem{thm}[cor]{Theorem}
\newtheoremstyle{thmstylenn}% name
{15pt}% Space above
{5pt}% Space below
{\it}% Body font
{}% Indent amount: Indent amount: empty = no indent, \parindent = normal paragraph indent
{\bf}% Theorem head font
{ \ref{lem:sp2rep}.}% Punctuation after theorem head
{ }% Space after theorem head: Space after theorem head: { } = normal interword space; \newline = linebreak
{}% Theorem head spec (can be left empty, meaning `normal')
\theoremstyle{thmstylenn}
\theoremstyle{definition}
\newtheorem{NB}[cor]{Remark}
\newtheorem{dfn}[cor]{Definition}
\newcommand{\Kommentar}[1]{}
\newcommand{\bdm}{\begin{displaymath}}
\newcommand{\edm}{\end{displaymath}}
\newcommand{\be}{\begin{equation}}
\newcommand{\ee}{\end{equation}}
\newcommand{\ba}[1]{\begin{array}{#1}}
\newcommand{\ea}{\end{array}}
\newcommand{\bea}[1][]{\begin{eqnarray#1}}
\newcommand{\eea}[1][]{\end{eqnarray#1}}
\newcommand{\btab}{\begin{tabular}}
\newcommand{\etab}{\end{tabular}}
\newcommand{\x}{\times}
\newcommand{\ra}{\rightarrow}
\newcommand{\lra}{\longrightarrow}
\newcommand{\rk}{\ensuremath{\mathrm{rk}\,}}
\newcommand{\Id}{\ensuremath{\mathrm{Id}}}
\newcommand{\del}{\partial}
\newcommand{\ad}{\ensuremath{\mathrm{ad}}}
\newcommand{\C}{\ensuremath{\mathbb{C}}}
\newcommand{\R}{\ensuremath{\mathbb{R}}}
\newcommand{\mrm}[1]{\mathrm{#1}}
\newcommand{\vphi}{\ensuremath{\varphi}}    
\newcommand{\End}{\ensuremath{\mathrm{End}}}
\newcommand{\kr}{\ensuremath{\mathcal{R}}}
\newcommand{\Ad}{\ensuremath{\mathrm{Ad}\,}}
\renewcommand{\ad}{\ensuremath{\mathrm{ad}\,}}
\newcommand{\GL}{\ensuremath{\mathrm{GL}}}
\newcommand{\SL}{\ensuremath{\mathrm{SL}}}
\newcommand{\su}{\ensuremath{\mathg{su}}}
\newcommand{\SU}{\ensuremath{\mathrm{SU}}}
\newcommand{\U}{\ensuremath{\mathrm{U}}}
\newcommand{\so}{\ensuremath{\mathg{so}}}
\newcommand{\Spin}{\ensuremath{\mathrm{Spin}}}
\newcommand{\spin}{\ensuremath{\mathg{spin}}}
\renewcommand{\k}{\ensuremath{\mathfrak{k}}}
\newcommand{\g}{\ensuremath{\mathfrak{g}}}
\newcommand{\h}{\ensuremath{\mathfrak{h}}}
\newcommand{\hol}{\ensuremath{\mathfrak{hol}}}
\newcommand{\m}{\ensuremath{\mathfrak{m}}}
\newcommand{\n}{\ensuremath{\mathfrak{n}}}
\begin{document}
\def\haken{\mathbin{\hbox to 6pt{%
                 \vrule height0.4pt width5pt depth0pt
                 \kern-.4pt
                 \vrule height6pt width0.4pt depth0pt\hss}}}
    \let \hook\intprod
\setcounter{equation}{0}
%
%------ draw title page -----
%
\thispagestyle{empty}
%
%\hbox to \hsize{%
%  \vtop{} \hfill
%  \vtop{\hbox{PRELIMINARY VERSION}}}
%------------------------------
\date{\today}
%---------------------------------------------------------------------------------------
\title[Tangent Lie groups]{Tangent Lie groups are Riemannian naturally reductive spaces}
%---------------------------------------------------------------------------------------
%
% author and address
%
%-------------------------------------------
%
\author{Ilka Agricola}
\author{Ana Cristina Ferreira}
%
%-------------------------------------------
\address{\hspace{-5mm} 
Ilka Agricola,  
Fachbereich Mathematik und Informatik, 
Philipps-Universit\"at Marburg,
Hans-Meerwein-Stra\ss{}e,
D-35032 Marburg, Germany,
{\normalfont\ttfamily agricola@mathematik.uni-marburg.de}}
\address{\hspace{-5mm} 
Ana Cristina Ferreira,
Centro de Matem\'atica,
Universidade de Minho,
Campus de Gualtar,
4710-057 Braga, Portugal, 
{\normalfont\ttfamily anaferreira@math.uminho.pt}}
%
% 
%-----------------------------------------------------------
\begin{abstract}
%---------------
Given a compact Lie group $G$ with Lie algebra $\g$,
we consider its tangent Lie group $TG\cong G\ltimes_{\Ad} \g$. In this short note,
we prove that $TG$ admits a left-invariant naturally reductive Riemannian metric $g$
and a metric connection with skew torsion $\nabla$ such that $(TG,g,\nabla)$ is
naturally reductive. An alternative spinorial description of the same connection
on the direct product $G\x \g$ generalizes in a rather subtle way to $TS^7$, which 
is in many senses almost a tangent Lie group.
\end{abstract}
%-------------
\maketitle
%----------------
%\tableofcontents
%----------------
\pagestyle{headings}
%
%
%-------------- body of the document ------------------------------------------
%
%---------------------------------------------------------------------------- 
\section{Introduction}
%----------------------------------------------------------------------------
%
%
Among all homogenous Riemannian manifolds, naturally reductive spaces are a class 
of particular
interest. Traditionally, they are defined as  Riemannian manifolds $(M=G/K,g)$ with a
reductive complement $\m$ of $\k$ in $\g$ such that
\begin{equation}\label{eq.NR}
\langle [X,Y]_\m, Z\rangle + \langle Y, [X,Z]_\m\rangle\ =\ 0 \ 
\text{ for all } X,Y,Z\in\m,
\end{equation}
where $\langle-,-\rangle$ denotes the inner product on $\m$ induced from 
$g$. For any reductive homogeneous space, the submersion $G\rightarrow G/K$ induces a 
connection that is called the \emph{canonical connection}.  It is   a
metric connection $\nabla$ with torsion $T(X,Y)=-[X,Y]_\m$ which satisfies 
$\nabla T = \nabla\kr = 0$, and  condition (\ref{eq.NR}) thus states that
a naturally reductive homogeneous space is a reductive space for which the torsion
 $T(X,Y,Z):=g(T(X,Y),Z)$ is a $3$-form on $G/K$ (see
\cite[Ch.\,X]{Kobayashi&N2} as a general reference). 
Classical examples of naturally reductive homogeneous spaces
include irreducible symmetric spaces, isotropy irreducible 
homogeneous manifolds, Lie groups with a bi-invariant metric, and
Riemannian $3$-symmetric spaces. 

In the recent article \cite{Agricola&F&F14}, the  authors together with
Thomas Friedrich (Berlin) initiated a systematic investigation and, in  dimension
six,
achieved the classification of naturally reductive homogeneous spaces. This is done
by applying recent results and techniques from
the holonomy theory of metric connections with skew torsion. Lie Groups (and spheres)
appearing in the classification always play a special role
because  they may admit several naturally reductive structures (see \cite{OR12a}, 
\cite{Olmos&R13} for details on these rather subtle points). The motivation
for this paper was to describe explicitly the naturally reductive structure
on $S^3\ltimes \R^3$  discovered in \cite{Agricola&F&F14}, and to 
investigate whether it  can be generalized. This turned out not to be as straight 
forward as expected. In fact, while a lot is known about
left-invariant naturally reductive metrics on \emph{compact} Lie groups (see 
\cite{DAtri&Z79}, \cite{Chrysikos16}), much less information is available
on \emph{non-compact} Lie groups (C.\,Gordon gives a general description of
naturally reductive nilmanifolds in \cite{Gordon85}, the more recent article
\cite{Agricola&F&S15} investigates quaternionic Heisenberg groups as
naturally reductive spaces).
Instead of the traditional approach, we shall often work with the following
definition.
\begin{dfn}
%----------------------
A Riemannian manifold
$(M,g)$ is said to be \emph{naturally reductive} if it is a  homogeneous space $M=G/K$
endowed with a metric connection $\nabla$ with skew torsion $T$ such that its
torsion and curvature $\kr$ are $\nabla$-parallel, 
i.\,e.~$\nabla T=\nabla \kr=0$. The connection $\nabla$ will then be called the 
\emph{Ambrose-Singer connection} or, loosely, the \emph{naturally reductive structure}
of $(M,g)$.
\end{dfn}
If $M$ is connected, complete, and simply connected, a theorem of Ambrose and Singer
asserts that the space is indeed naturally reductive in
the traditional sense \cite{Ambrose&S58}.

In this note, we prove that the tangent bundle $TG\cong G\ltimes_{\Ad} \g$ of a 
compact Lie group $G$ with Lie algebra $\g$ admits a left-invariant naturally 
reductive Riemannian metric $g$
and a metric connection with skew torsion $\nabla$ such that $(TG,g,\nabla)$ is
naturally reductive. Furthermore, we will define a suitable almost Hermitian
structure on $TG$ such that $\nabla$ is its characteristic connection
\cite{Friedrich&I1,Agricola06}.
An alternative spinorial description of the same connection
on the direct product $G\x \g$ generalizes in a rather subtle way to $TS^7$, which 
is in some sense almost a tangent Lie group.
Our construction will make use of the following well-known fact.
\begin{NB}\label{NB.struct-constants}
%--------------------------------------
A compact Lie group $G$ of dimension $n$ endowed with a bi-invariant 
metric admits an orthonormal basis $e_1,\ldots, e_n$ of $\g$ such that its
structure constants $C^k_{ij}$, defined by $[e_i, e_j]\  =\  \sum_{k=1}^n C_{ij}^k e_k$,
are totally skew-symmetric in all indices. All structure constants on compact
Lie groups appearing in this paper will be chosen in this way.
\end{NB}
It is well known that $G$ itself  carries a family of naturally reductive
structures whose torsion a multiple of  $T(X,Y,Z):=\langle [X,Y],Z\rangle$---which is indeed a 
$3$-form by the structure constants' property that we just described
\cite{Cartan&Sch26a, Kobayashi&N2}.
Naively, it is this antisymmetry property that allows us to define canonical
$3$-forms on $TG$ as well, which are then candidates for
torsion tensors. However, the picture is more sophisticated than this.
We emphasize that our metric is neither a product metric, a $g$-natural metric
(see Remark \ref{NB.lifting}) nor a warped product metric or
of any `general' known type. We find it rather surprising that this
rather `asymmetric' metric has any special properties at all, and we believe that
it is worth investigating its occurrence further.

\medskip

\textbf{Acknowledgments.} Ilka Agricola 
acknowledges financial support by the
DFG within the priority programme 1388 "Representation theory".
Ana Ferreira thanks Philipps-Universit\"at Marburg for its
hospitality during a research stay in April-July 2015. Both authors
thank Reinier Storm for many intricate discussions on the topic.
%
%-----------------------------------------------------------------------
\section{Tangent Lie groups as naturally reductive spaces}\label{sec.main}
%------------------------------------------------------------------------
\subsection{The Lie group $\mrm{SU}(2) \ltimes \R^3$}
%-------------------------------------------------------------------------
%
In \cite[Theorems 8.9, 8.12]{Agricola&F&F14}, we proved the following
classification result for $6$-dimensional naturally reductive spaces and, more
generally, spaces with parallel skew torsion:
\begin{thm}\label{thm.classification-dim-6}
%-------------------------------------------
Let $(M^6, g, T)$ be a complete, simply connected Riemannian $6$-manifold 
with a metric connection $\nabla$ with parallel
skew torsion $T$, $\rk(* \sigma_T)=6$ and $\ker T = 0$. Then one of the 
following cases occurs:
\begin{enumerate}
\item[] Case $D.1$: $(M^6,g)$ is isometric to a nearly K\"ahler $6$-manifold.
\item[] Case $D.2$: $\hol^{\nabla} = \so(3) \subset \su(3)$, the manifold 
$(M^6,g)$ is naturally reductive, 
almost Hermitian of Gray-Hervella type  $\mathcal{W}_1 \oplus \mathcal{W}_3$, 
and isometric to one of the following 
Lie groups with a suitable family of left-invariant metrics:
\begin{enumerate}
\item The nilpotent Lie group with Lie algebra  $\R^3\x \R^3$
with commutator $[(v_1, w_1), (v_2, w_2)]\ =\ (0, v_1\x v_2)$
(see \cite{Sch07}),
\item the direct or the semidirect product of $S^3$ with  $\R^3$,  
\item the product $S^3\x S^3$  (described in Section \cite[9.4]{Agricola&F&F14}),
\item the Lie group $\SL(2,\C)$ viewed as a $6$-dimensional
real manifold  (described in Section \cite[9.5]{Agricola&F&F14}).
\end{enumerate}  
\end{enumerate}  
\end{thm}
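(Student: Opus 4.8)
Since the statement is reproduced here from \cite{Agricola&F&F14}, I only sketch how the classification is obtained. Write $\nabla=\nabla^g+\tfrac12 T$ for the metric connection with skew torsion, so that $\nabla T=0$ by hypothesis. The first, purely algebraic, step is to determine the possible pointwise types of the torsion form $T\in\Lambda^3\R^6$. Because $\nabla T=0$, the holonomy group $\Hol^\nabla$ is contained in the stabiliser of $T$ inside $\SO(6)$, and the two non-degeneracy hypotheses $\ker T=0$ and $\rk(*\sigma_T)=6$ are exactly what is needed to collapse the orbit stratification of generic $3$-forms down to a short list. First I would show that these conditions force the stabiliser algebra $\mathfrak{g}_T$ to be one of $\su(3)$ (the stable form defining an $\SU(3)$-structure once the metric is fixed) or the subalgebra $\so(3)\subset\su(3)$; all other orbit types are excluded because they either possess a nontrivial kernel or make $*\sigma_T$ degenerate.

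This algebraic dichotomy produces the two cases of the statement. In the $\su(3)$-case the parallel form $T$ is, up to scale, the torsion of a nearly K\"ahler structure, and one reads off Case $D.1$; note that here no homogeneity is claimed, since $\nabla\kr$ need not vanish. The substance of Case $D.2$ is to treat $\mathfrak{g}_T=\so(3)$. Here I would first pin down the almost Hermitian type: the induced $\SU(3)$-structure has intrinsic torsion lying in $\mathcal{W}_1\oplus\mathcal{W}_3$, which is precisely the class admitting a characteristic connection with skew torsion. The key geometric step is then to prove $\nabla\kr=0$. This I would obtain from the first Bianchi identity for $\nabla$ together with $\nabla T=0$: parallel torsion forces the curvature endomorphism to take values in $\hol^\nabla=\so(3)$ and to be assembled algebraically out of $T$, so that differentiating and using $\nabla T=0$ gives $\nabla\kr=0$. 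By the Ambrose--Singer theorem, completeness and simple-connectedness then upgrade $(M^6,g,\nabla)$ to a genuine naturally reductive homogeneous space.

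It remains to identify the four homogeneous models. With $\hol^\nabla=\so(3)$ and $\m\cong\R^6$ carrying the $\so(3)$-module structure fixed by the reduction, the pair $(\hol^\nabla\op\m,[\cdot,\cdot])$ is an infinitesimal model of Ambrose--Singer type whose brackets are determined by $T$ and $\kr$. I would classify such Lie algebras: imposing the Jacobi identity together with the algebraic Bianchi identity leaves only finitely many admissible families of curvature parameters, which integrate to the nilpotent group on $\R^3\x\R^3$, to the direct and the semidirect products of $S^3$ with $\R^3$, to $S^3\x S^3$, and to $\SL(2,\C)$.

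The main obstacle is this last classification of $\so(3)$-infinitesimal models, together with the preceding reduction $\su(3)\to\so(3)$. The delicate point is that a single underlying homogeneous space can support several inequivalent naturally reductive structures, so one must carefully distinguish the transvection algebra from $\hol^\nabla$ and discard the solutions of the Jacobi identity that do not correspond to new geometries. Excluding all intermediate holonomies between $\so(3)$ and $\su(3)$ is the other subtle point, and it is exactly here that the hypotheses $\ker T=0$ and $\rk(*\sigma_T)=6$ are indispensable.
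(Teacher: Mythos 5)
The first thing to note is that this paper does not prove Theorem \ref{thm.classification-dim-6} at all: it is quoted from \cite[Theorems 8.9, 8.12]{Agricola&F&F14}, and the only proof-level material recalled here is the \emph{output} of that argument for case D.2, namely a local orthonormal frame in which $T=\alpha\, e_{135}+\alpha'\, e_{246}+\beta\,(e_{245}+e_{236}+e_{146})$, the curvature $\kr=\beta(\alpha-\beta)[(e_{35}+e_{46})^2+(e_{15}+e_{26})^2+(e_{13}+e_{24})^2]$, and the identification of the groups by parameter regimes (e.g.\ $4\beta(\alpha-2\beta)-{\alpha'}^2=0$ for $\SU(2)\ltimes\R^3$). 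Your outline is structurally compatible with that route: the parallel $4$-form $\sigma_T$ with $\rk(*\sigma_T)=6$ yields a $\nabla$-parallel almost Hermitian structure, a holonomy/stabiliser analysis produces the dichotomy, and a Nomizu/transvection-type construction identifies the models. So there is no divergence of strategy to report; the problem is that what you wrote is a roadmap rather than a proof, with gaps exactly at the decisive points.

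Concretely: (i) your argument for $\nabla\kr=0$ in case D.2 --- first Bianchi identity plus $\nabla T=0$, with $\kr$ ``assembled algebraically out of $T$'' --- is not a proof, and your own caveat about case D.1 shows why: parallel torsion alone never forces parallel curvature (complete non-homogeneous nearly K\"ahler $6$-manifolds have $\nabla T=0$ but $\nabla\kr\neq 0$, and they satisfy all hypotheses of the theorem). What could make your argument work is a representation-theoretic fact specific to the reduction: with $\hol^\nabla\subset\so(3)$, the curvature is a pair-symmetric element of $\mathrm{Sym}^2(\so(3))$ whose cyclic Bianchi sum equals $\sigma_T$ (using $dT=2\sigma_T$), and one must check that the Bianchi map is injective on $\mathrm{Sym}^2(\so(3))$ for this particular embedding $\so(3)\subset\so(6)$, so that $\kr$ is the \emph{unique}, hence parallel, solution; none of this is verified. (ii) The claim that $\mathcal{W}_1\oplus\mathcal{W}_3$ ``is precisely the class admitting a characteristic connection'' is wrong: that class is $\mathcal{W}_1\oplus\mathcal{W}_3\oplus\mathcal{W}_4$ \cite{Friedrich&I1}; to land in $\mathcal{W}_1\oplus\mathcal{W}_3$ one must additionally show that the Lee form vanishes, $\delta^g\Omega=0$, which is a separate computation (compare the use of Eq.~\eqref{delta-omega} in the proof of Theorem \ref{thm.semidirect}). (iii) Most importantly, the two steps that constitute the actual content of the theorem --- ruling out all stabilisers other than $\su(3)$ and $\so(3)$ under the hypotheses $\ker T=0$ and $\rk(*\sigma_T)=6$, and then classifying the admissible $\so(3)$-infinitesimal models and integrating them to exactly the four listed groups --- are deferred with ``I would show'' and ``I would classify''. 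That is where essentially all the work of \cite{Agricola&F&F14} lies; as written, your proposal reproduces the architecture of the statement but not its proof.
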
  
Case $(b)$ turned out to be rather non-intuitive and 
was therefore not described 
any further. In the proof, $S^3=\mathrm{SU}(2)$ appeared 
as the isometry group of the $3$-dimensional Euclidean space. 
We start by giving an explicit description of this space, and afterwards a 
generalisation of it. 

Recall that our Ansatz was as follows: $(M^6, g, T)$ is a complete, simply 
connected Riemannian 6-manifold with parallel skew torsion $T$ such that 
$\mrm{rk}(\ast \sigma_T)=6$ and $\mrm{ker}T =0$. In 
\cite[Theorem 8.9, case D.2]{Agricola&F&F14}, it is proved that
there is a local orthonormal 
frame  $\{e_1, \cdots, e_6\}$ such that the torsion form can be written as
\be \label{eq: torsion1}
T= \alpha\, e_{135}+ \alpha^\prime\, e_{246} + \beta\, (e_{245}+e_{236}+e_{146}), 
\mbox{ hence } \sigma_T = \beta (\beta-\alpha)\, (e_{1256} + e_{1234} + e_{3456})
\ee
and the curvature is given by
\be \label{eq: curvature2}
\kr= \beta (\alpha - \beta)\, [ (e_{35}+e_{46})^2 + (e_{15}+e_{26})^2 + (e_{13}+e_{24})^2]
\ee
with $\beta \neq 0$ and $\alpha \neq \beta$ to ensure that $\ast \sigma_T $ 
defines an almost complex structure. Our group $G$ corresponds to the parameters 
$\alpha,\alpha^\prime$ and $\beta$ 
such that $\alpha \neq 2\beta$ and $4\beta(\alpha-2\beta)-{\alpha^\prime}^2=0$. 

Here is the explicit realization of $\SU(2)\ltimes \R^3$. 
Let $\{f_1, f_2, f_3 \}$ be the 
canonical basis of $\R^3$ and $\{e_{12}, e_{13}, e_{23}\}$ be the standard basis 
of the Lie algebra $\mathfrak{su}(2) = \Lambda^2 \R^3$. Consider real 
parameters $a,b\neq 0$ and the basis of $\mathfrak{su}(2)\ltimes \R^3$ given by 
$$
%\begin{array}{ccc}
x_1 = a(e_{12}, 0),\  x_3 = a(e_{13}, 0), \  x_5 = a(e_{23}, 0), \ 
x_2 = (b e_{12}, f_3),\ x_4 = (b e_{13}, - f_2),\  x_6 = (b e_{23}, f_1).
%\end{array}
$$
with the following commutator relations given by the structure of semi-direct product
$$\begin{array}{lll}
\phantom{.} [x_1, x_3] = a x_5, & [x_2, x_4] 
= 2 b x_6 - \frac{b^2}{a} x_5, & [x_1, x_4] = [x_2,x_3] = a x_6, \\
\phantom{.} [x_5, x_1] =  a x_3, & [x_6, x_2] 
=  2b x_4 - \frac{b^2}{a} x_3, & [x_6, x_1] = [x_5, x_2] =  a x_4,\\
\phantom{.} [x_3, x_5] = a x_1, & [x_4, x_6] 
= 2b x_2 - \frac{b^2}{a}x_1, & [x_3, x_6] = [x_4, x_5] = a x_2.  \\
\end{array}$$
We define an almost Hermitian structure $(G, g, J)$ (indexed to the parameters 
$a$ and $b$) such that $x_1,\ldots, x_6 $ are an orthonormal basis of the 
Riemannian metric $g$ and the almost complex structure $J$ is such that its 
fundamental form is given by $\Omega = - (x_{12}+x_{34}+x_{56})$;
here and in the sequel, we will write $x_{ijk}$ for $x_i\wedge x_j\wedge x_k$, etc. 
We now compute 
the torsion form of the characteristic connection of $(g, J)$. The Nijenhuis tensor of 
$J$ is easily seen to be 
$$N_J = \left( a - \frac{b^2}{a} \right) x_{135} + 2 b (x_{136}+x_{145}+x_{235}) 
- 2b x_{246} + \left( \frac{b^2}{a} - a \right)(x_{146}+x_{236}+x_{245}) $$
and the twisted derivative of $\Omega$ is 
$$d^J \Omega = 3 \frac{b^2}{a} x_{135} + a (x_{245} + x_{236} + x_{146}) 
- 2b (x_{235}+x_{145}+x_{136})$$ and thus the characteristic torsion is \cite{Friedrich&I1,Agricola06}
\be \label{eq: torsion3}
T= N_J+ d^J \Omega = \left( a+ 2 \frac{b^2}{a} \right) x_{135} - 2b x_{246} 
+ \frac{b^2}{a} (x_{146}+x_{236}+x_{245}).
\ee
The connections forms are easily computed to be
\bdm
\Lambda(x_i)\ =\ \left( a+\frac{b^2}{a} \right) H_i \ \text{ for }i=1,3,5\
\text{ and } \  \Lambda(x_j)\ =\ 0 \ \text{ for }j=2,4,6,
\edm
where $H_1 = x_{35}+ x_{46}$, $- H_3 = x_{15}+x_{26}$  and $H_5 = x_{13}+x_{24}$. 
These 
elements satisfy the bracket relations $[H_1,H_3]=H_5$, $[H_5, H_1]= H_3$ and 
$[H_3, H_5]= H_1$, which means that the characteristic connection has holonomy 
algebra $\mathfrak{su}(2)$, as it should.
The curvature is then a constant multiple of the projection onto the holonomy 
algebra, namely
\be \label{eq: curvature4}
\kr = \frac{b^2}{a^2} (a^2+b^2) [H_1 \otimes H_1 + H_3 \otimes H_3 
+ H_5 \otimes H_5].
\ee
 It is a simple calculation to verify that $\nabla T = \nabla R = 0$ so, 
indeed, $G= \mathrm{SU}(2)\ltimes\R^3$ is equipped with a two-parameter family 
of naturally reductive metrics.
Comparing our explicit formulas of Eq. \ref{eq: torsion3} and 
Eq. \ref{eq: curvature4} with the Ansatz of Eq. \ref{eq: torsion1} and 
Eq. \ref{eq: curvature2}, if we take $\alpha = a + 2 \frac{b^2}{a}$, 
$\beta = \frac{b^2}{a}$ and $\alpha^\prime = 2b$ then indeed 
$(\alpha^\prime)^2 - 4 \beta (\alpha - 2 \beta) = 0$ and 
$\beta (\alpha - \beta) = \frac{b^2}{a^2}(a^2+b^2)$ so our computations check out.
\begin{NB}
%---------
In fact, more can be said about the almost complex structure of the examples occurring in
Case D.2 of Theorem \ref{thm.classification-dim-6}. The $\U(3)$ structure turns out to
be an $\SU(3)$ structure, hence it may be defined by a real spinor $\varphi$ of constant length
\cite{ACFH14}. A computer-aided computation reveals that $\varphi$ satisfies the  equation
\bdm
\nabla^g_X\varphi\ =\ \eta(X)\varphi + S(X)\varphi
\edm
with $\eta=0$ and 
\bdm
S= - \frac{\alpha'}{8} J + \frac{3\beta-\alpha}{8}\mathrm{Id}+ 
\left[ 
\begin{array}{cccccc}
-\frac{\beta+\alpha}{8} & \frac{\alpha'}{8} & 0 & 0 & 0 & 0\\
\frac{\alpha'}{8} & \frac{\beta+\alpha}{8} & 0 & 0 & 0 & 0 \\
0 & 0 & -\frac{\beta+\alpha}{8} & \frac{\alpha'}{8} & 0 & 0 \\
0 & 0 & \frac{\alpha'}{8} & \frac{\beta+\alpha}{8} & 0 & 0\\
0& 0 &0 & 0 &- \frac{\beta+\alpha}{8} & \frac{\alpha'}{8}\\ 
0& 0 & 0 & 0 &\frac{\alpha'}{8} & \frac{\beta+\alpha}{8}
\end{array}\right],
\edm
where the last summand anticommutes with $J$.
Therefore, these three summands are the $\chi_1$, $\chi_{\bar{1}}$ and $\chi_3$ components, 
respectively, of our $\mathrm{SU}(3)$-structure. An $\mathrm{SU}(3)$-structure is said 
to be half-flat if the endomorphism $S$ is symmetric -- in our example this happens if 
and only if  $\alpha'=0$.
Let $D^g$ be the Dirac operator associated to the Levi-Civita connection. We can 
readily check that
$$D^g(\varphi)  = 3\frac{\alpha-3\beta}{4}\varphi - 3\frac{\alpha'}{4}\tilde\varphi,
$$
where $\tilde\varphi$ is a second spinor linearly independent of $\varphi$.
We see that if $\alpha'=0$, then $\varphi$ is an eigenspinor, and if in addition 
$\alpha -3\beta= 0$, then $\varphi\in \mathrm{ker}D^g$. 
\end{NB}
%
%-------------------------------------------------------------------------------
\subsection{The semidirect product $TG= G \ltimes_\Ad \mathfrak{g}$}
%---------------------------------------------------------------------------------
%
Our aim is now to show how the previous example can be generalized to a
construction that starts with any compact Lie group endowed with a bi-invariant metric.

Let $N$ and $H$ be two connected Lie groups, 
$\varphi: H \ra \mathrm{Aut}(N)$ a non-trivial group
homomorphism, and assume that $N$ is abelian. The
semidirect product of $H$ and $N$ with respect 
to $\varphi$, denoted $H \ltimes_\varphi N$, is the manifold $H\times N$ endowed
with the multiplication
\bdm
(h_1, n_1)(h_2, n_2)\ =\ (h_1h_2, n_1+ \varphi(h_1)n_2 )\ 
\text{ for all }h_1, h_2\in H \text{ and } n_1, n_2 \in N. 
\edm
At the Lie algebra level, we have an induced map 
$\varphi_*: \mathfrak{h} \ra \mathrm{End}(\mathfrak{n} )$ and the bracket of 
two elements $(A, u), (B,v)\in \mathfrak{h}\oplus \mathfrak{n}$ is 
\bdm
[(A,u),(B,v)]\ =\ ([A,B], \varphi_*(A)(v)-\varphi_*(B)(u)).
\edm
Since $N$ is abelian, $\n$ is just a vector space with trivial Lie bracket.
We shall shortly construct explicitly a left-invariant metric on 
$H \ltimes_\varphi N$ for some particular choice of $H$ and $N$. Instead of 
checking manually that it is not bi-invariant, let us quickly prove that, 
more generally, $H \ltimes_\varphi N$ does not admit any bi-invariant metrics 
at all. 
%As the  argument shows, this is due to $N$ being abelian. 
%
\begin{lem}\label{lem.metric-not-bi-invariant}
%--------------------------------------------
The semidirect product $H\ltimes_\varphi N$  does not admit
bi-invariant Riemannian metrics.
\end{lem}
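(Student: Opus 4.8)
The plan is to use the standard criterion that a bi-invariant metric on a Lie group corresponds to an $\Ad$-invariant inner product on its Lie algebra, which at the infinitesimal level means an inner product $\langle-,-\rangle$ on $\lalg = \h \oplus \n$ for which every operator $\ad_X$ is skew-symmetric. I would argue by contradiction: suppose such an inner product exists, and derive an incompatibility from the semidirect-product bracket structure, using crucially that $\n$ is abelian and that $\vphi_*$ is nontrivial.

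First I would write out the skew-symmetry conditions coming from $\ad_{(A,u)}$ for $(A,u)\in\h\oplus\n$. The key relations are obtained by pairing brackets against suitable vectors. In particular, for $u,v\in\n$ we have $[(0,u),(0,v)]=0$ since $\n$ is abelian, so these directions bracket trivially; the interesting constraints come from mixed brackets $[(A,0),(0,v)]=(0,\vphi_*(A)v)$. The skew-symmetry of $\ad_{(A,0)}$ applied to the pair $(0,v)$ and $(0,w)$ forces $\vphi_*(A)$ to be skew-symmetric with respect to the restriction of the metric to $\n$. More importantly, I would pair $(A,0)$ with $(0,v)$: skew-symmetry of $\ad_{(0,v)}$ gives $\langle [(0,v),(A,0)],(0,w)\rangle=-\langle(A,0),[(0,v),(0,w)]\rangle=0$ because $[(0,v),(0,w)]=0$. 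But the left-hand side equals $-\langle(0,\vphi_*(A)v),(0,w)\rangle$, so we obtain $\langle \vphi_*(A)v,w\rangle_\n=0$ for all $v,w\in\n$ and all $A\in\h$.

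This last identity says precisely that $\vphi_*(A)=0$ as an operator on $\n$ for every $A\in\h$, i.e.\ $\vphi_*\equiv 0$. Since $\n$ is the Lie algebra of a connected group $N$ and $\vphi$ is assumed nontrivial with $H$ connected, $\vphi_*$ cannot vanish identically; this is the contradiction. I expect the main subtlety to be handling the metric's behavior on mixed blocks rather than assuming the decomposition $\h\oplus\n$ is orthogonal: one cannot take orthogonality of $\h$ and $\n$ for granted. To address this cleanly, I would either argue that the computation above makes no orthogonality assumption (the pairing $\langle(0,\vphi_*(A)v),(0,w)\rangle$ only involves $\n$-$\n$ components, and the vanishing of the full $\n$-inner product block would contradict positive-definiteness), or first establish that positive-definiteness of the restriction to $\n$ is automatic and then conclude $\vphi_*(A)=0$ directly. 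The cleanest route is to test against $w=\vphi_*(A)v$ itself, yielding $\|\vphi_*(A)v\|_\n^2=0$ for all $v$, hence $\vphi_*(A)=0$, with no orthogonality hypothesis needed.

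Finally I would note that the hypothesis that $N$ is abelian is exactly what makes $[(0,v),(0,w)]=0$ and thus drives the whole argument; for $TG=G\ltimes_{\Ad}\g$ this applies since $\g$ is viewed as an abelian group under addition, and $\Ad$ is nontrivial whenever $G$ is nonabelian, so the lemma indeed shows the forthcoming left-invariant metric cannot secretly be bi-invariant.
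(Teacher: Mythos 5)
Your proof is correct and takes essentially the same route as the paper: both reduce bi-invariance to skew-symmetry of every $\ad X$ (using connectedness), apply skew-symmetry of $\ad (0,u)$ to the mixed bracket $[(A,0),(0,u)]=(0,\varphi_*(A)u)$, and use that $N$ is abelian together with positive-definiteness on the subspace $\{0\}\oplus\n$ to force $\varphi_*\equiv 0$, contradicting non-triviality of $\varphi$. The only cosmetic difference is that the paper fixes a witness $v=\varphi_*(A)u\neq 0$ at the outset and derives $0<\langle (0,v),(0,v)\rangle =0$ in one chain, whereas you first conclude $\varphi_*(A)=0$ for all $A$ and then invoke non-triviality.
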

\begin{proof}
%-------------
Left-invariant Riemannian metrics always exist, as they can be identified
with positive definite scalar products on $\h\oplus\n$; let $\langle,\rangle$
be one such product. Since $H\ltimes_\varphi N$ is  connected,  
$\langle,\rangle$
will be bi-invariant if and only if $\ad X$ is a skew-symmetric endomorphism
for any $X\in \h\oplus\n$. Let $A\in \h,\ u\in\n$ be such that
$\varphi_*(A)u=: v\neq 0$; this is possible, since we assumed  $\varphi$ 
non-trivial. Then
\bea[*]
0 & < & \langle (0,v), (0,v)\rangle\ =\  \langle (0,\varphi_*(A)u), (0,v)\rangle
\ =\ \langle [(A,0),(0, u)] , (0,v)\rangle\\
& =&  - \langle [(0, u),(A,0)] , (0,v)\rangle
= + \langle (A,0), [(0, u),(0,v) ] \rangle =0,
\eea[*]
since $N$ is abelian. This yields a contradiction and finishes the proof.
\end{proof}
 By a classical result of Milnor \cite{Milnor76}, bi-invariant metrics exist
only on groups isomorphic to a direct product of a compact Lie group and a
vector space (viewed as an abelian group), so actually our proof implies 
that $H \ltimes_\varphi N$ is not  isomorphic to such a product.\footnote{As an aside, we observe that 
it is not difficult to construct examples of semidirect products with $\vphi$ 
non-trivial and $N$ non-abelian admitting bi-invariant metrics; 
as the proof goes, it is clear that these
will have the property $\langle \h,[\n,\n]\rangle\neq 0$, and by Milnor's result,
they are actually isomorphic (in a non-trivial way) to direct products.}
\begin{dfn}
%------------
The Lie group $G\ltimes_\Ad \g$ obtained when choosing  $H=G$ any connected
Lie group,  $N=\g$ its Lie algebra (viewed as an abelian Lie group with 
respect to addition) and $\varphi=\Ad : G\ra \GL(\g)$ the adjoint 
representation will be called the \emph{tangent Lie group of $G$} and will
be denoted by $TG$.
\end{dfn}
Of course, $TG$ is indeed the tangent bundle of $G$.
The geometry of tangent Lie groups is described in \cite{Yano&K66-I, Yano&K66-II}
and \cite{Sekizawa86}. The Lie algebra of $TG$ is spanned by the complete lifts 
and the vertical lifts to $TG$ of the left invariant vector fields on $G$,
denoted by an upper index $c$ and $v$ respectively,
\bdm
\mathrm{Lie}(TG)\ =\ \{A^c+ B^v\, :\ A,B\in \g\},
\edm
and their commutator structure coincides with that of $\mathrm{Lie}(G\ltimes_\Ad \g)$
described above (see \cite{Yano&K66-I} for a proof),
\bdm
[A^c,B^c]\ =\ [A,B]^c,\quad [A^v,B^v]\ =\ 0,\quad
[A^c,B^v]\ =\ [A,B]^v.
\edm
Hence, given a basis $d_1,\ldots,d_n$ of $\g$, the $2n$ elements
$e_i:=d_i^c$ and $f_i:=d_i^v$ ($1\leq i\leq n$) form a basis of 
$\mathrm{Lie}(TG)\cong \g \ltimes_\ad \g$.

It is known that the complete lift of a semi-Riemannian metric $g$ on $M^n$
to $TM^n$ is a semi-Riemannian metric $g^c$ of split signature $(n,n)$, and that a 
connection $\nabla$ making $(M^n,g)$ a naturally reductive space lifts to a
connection $\nabla^c$ that turns  $(TM^n,g^c)$ into a naturally reductive space
\cite[Propositions 6.3, 7.8]{Yano&K66-I}, \cite[Theorem 3.6]{Sekizawa86}. 
Similar lifting results for constructing general \emph{Riemannian} metrics 
on $TM^n$ with a naturally reductive structure  are unknown. We shall construct such a 
metric in the case where $M^n=G$ is a Lie group.
\begin{thm}\label{thm.semidirect}
%---------------------------------
Let $G$ be a compact connected Lie group equipped with a bi-invariant metric, 
$TG= G \ltimes_\mathrm{ad} \g$ its tangent group.
Then there is a two parameter family $g_{a,b} \ (a,b\in \R^*)$ of left-invariant
Riemannian metrics on $TG$ such that 
$TG$ is naturally reductive. More precisely, there is a two parameter family 
of almost Hermitian structures of Gray-Hervella class 
$\mathcal{W}_1\oplus \mathcal{W}_3$ such that their characteristic connection 
$\nabla$, its torsion $T$, and curvature $\kr$, satisfy 
$\nabla T = \nabla \kr = 0$ and $\hol(\nabla)=[\g,\g]$.  
\end{thm}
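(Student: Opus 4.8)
The plan is to generalize the explicit $\SU(2)\ltimes\R^3$ computation verbatim, using the totally skew structure constants $C^k_{ij}$ from Remark \ref{NB.struct-constants}. First I would fix an orthonormal basis $d_1,\ldots,d_n$ of $\g$ realizing these skew constants, and define the basis $e_i=d_i^c$, $f_i=d_i^v$ of $\mathrm{Lie}(TG)$ with commutators $[e_i,e_j]=\sum_k C^k_{ij}e_k$, $[e_i,f_j]=\sum_k C^k_{ij}f_k$, $[f_i,f_j]=0$. Mimicking the rank-$6$ example (where $x_1,x_3,x_5$ were the ``horizontal'' $a$-scaled generators and $x_2,x_4,x_6$ the ``vertical'' ones carrying the $\R^3$ part), I would declare $g_{a,b}$ to be the left-invariant metric making the rescaled vectors $X_i:=a\,e_i$ and $Y_i:=b\,e_i+f_i$ orthonormal, and define the almost complex structure $J$ by the fundamental form $\Omega=-\sum_{i=1}^n X_i\wedge Y_i$. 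The key conceptual point is that the semidirect factor $\ltimes_{\ad}$ reproduces, for each pair $(i,j)$, exactly the same three-term commutator pattern as in the $\SU(2)$ case, so that every formula from that subsection should go through with the single scalar $C^k_{ij}$ playing the role that the structure constant of $\su(2)$ did.

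The main steps then are: (1) compute the Nijenhuis tensor $N_J$ and the twisted derivative $d^J\Omega$ in the $\{X_i,Y_i\}$ frame, and verify that their sum gives a characteristic torsion $T$ of the expected schematic form, with $\su(2)$ coefficients $a+2b^2/a$, $-2b$, $b^2/a$ replaced by the analogous expressions weighted by $C^k_{ij}$; this simultaneously shows $J$ is of Gray--Hervella class $\mathcal W_1\oplus\mathcal W_3$ (i.e.\ $N_J\neq0$ and the $\mathcal W_2,\mathcal W_4$ parts vanish). (2) Read off the connection $1$-forms $\Lambda$ and identify the holonomy-valued endomorphisms $H_k$ built from the $X_iX_j+Y_iY_j$ blocks; since these $H_k$ close under bracket according to the constants $C^k_{ij}$, the holonomy algebra is the span of the $\ad$-action, which is precisely $[\g,\g]$. (3) Compute $\kr$ and confirm it is a constant multiple of the projection onto $\hol(\nabla)$. (4) Verify $\nabla T=\nabla\kr=0$, which in the naturally reductive setting reduces to a Jacobi-type algebraic identity among the $C^k_{ij}$ and the $H_k$; the total skew-symmetry of the structure constants is exactly what makes these contractions vanish.

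I expect the main obstacle to be organizing the combinatorics so that the single-index $\SU(2)$ formulas lift cleanly to the $n$-dimensional case without drowning in the sums over $k$. In the rank-$6$ example the structure constants were essentially $\pm1$ on a single triple, so each displayed tensor had just a handful of terms; here each $H_k=\sum_{i<j}C^k_{ij}(X_iX_j+Y_iY_j)$ and the bracket relation $[H_k,H_l]=\sum_m C^m_{kl}H_m$ must be checked from the Jacobi identity for $\g$, and the parallelism $\nabla T=0$ must be phrased invariantly rather than entry-by-entry. The cleanest route is probably to avoid component computations altogether: write $T$ and $\kr$ as $\ad$-invariant tensors on $\g\ltimes_{\ad}\g$ and invoke the general principle that an $\ad(\k)$-invariant torsion on a reductive space with $\k=\hol(\nabla)=[\g,\g]$ is automatically $\nabla$-parallel. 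The delicate part is confirming that the \emph{Riemannian} metric $g_{a,b}$ — as opposed to the split-signature lift $g^c$ that the literature handles — together with $T=N_J+d^J\Omega$ actually produces a \emph{metric} connection with \emph{skew} torsion whose holonomy is contained in $[\g,\g]$; this is where the specific coupling constants $a,b$ must be chosen, just as $a+b^2/a$ was forced in the $\SU(2)$ case, and it is the step most likely to require genuine verification rather than formal analogy.
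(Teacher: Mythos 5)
Your proposal is correct and follows essentially the same route as the paper's proof: the same metric ansatz $x_i = a\,e_i$, $y_i = b\,e_i + f_i$, the same fundamental form $\Omega = -\sum_i x_i\wedge y_i$, the torsion obtained as $T = N + d^J\Omega$, the endomorphisms $H_k = \sum_{i<j} C^k_{ij}(x_{ij}+y_{ij})$ closing under bracket via the Jacobi identity, the curvature as a constant multiple of $\sum_k H_k\otimes H_k$, and a separate check that $\delta^g\Omega=0$ for the $\mathcal{W}_4$ part. The only step the paper makes fully explicit that you leave schematic is the holonomy computation $[\Lambda(x_i),\Lambda(x_j)]-\Lambda([x_i,x_j]) = \tfrac{b^2}{a^2}(a^2+b^2)\sum_k C^k_{ij}H_k$, whose nonvanishing coefficient (requiring $a,b\neq 0$) is exactly the ``genuine verification'' you correctly anticipate at the end.
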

\begin{proof}
%----------------
Let $n= \dim G$ and $\{d_1, \cdots, d_n\} $ be an orthonormal basis of the 
Lie algebra $\g$ with respect to the chosen bi-invariant metric, $e_i:=d_i^c$ 
and $f_i:=d_i^v$ their complete and vertical lifts, respectively, as explained above 
($1\leq i\leq n$). We may assume that the structure constants $C_{ij}^k$ are 
totally antisymmetric in all indices, see Remark \ref{NB.struct-constants}.

We define a two-parameter family of left-invariant Riemannian metrics 
$g_{a,b}$ ($a,b \neq 0$) on $TG$ by setting the following elements of 
$\g\ltimes_\ad \g$ to be orthonormal
\bdm
x_i = a e_i \qquad \mbox{and} \qquad y_i= b e_i+ f_i \ \quad (i=1, \ldots, n).
\edm
Also, we define an almost complex structure $J$ on $M$ by the two form
\bdm
\Omega \ :=\ - (x_1\wedge y_1 + \cdots + x_n \wedge y_n).
\edm
We have the following commutator relations 
\bdm
[x_i, x_j] = a \sum_{k=1}^n C_{ij}^k x_k, \quad [x_i, y_j] = a \sum_{k=1}^n C_{ij}^k y_k, 
\quad [y_i, y_j] =  b \sum_{k=1}^n C_{ij}^k (2 y_k - \frac{b}{a} x_k).
\edm
For ease of notation, we will omit the wedge product sign in general; for instance, 
we will write $x_{ij} y_k$ instead of $x_i\wedge x_j \wedge y_k$ etc. The Nijenhuis 
tensor of the almost complex structure is the skew-symmetric tensor 
\bdm 
N\ =\  \sum_{i<j<k}^n  C_{ij}^k \left[ 
\left( a - \frac{b^2}{a} \right) ( x_{ijk} - (x_iy_{jk}+y_ix_jy_k+ y_{ij} x_k))
- 2b\,(y_{ijk} - (x_{ij}y_k+y_ix_{jk}+x_iy_jx_k)) 
\right]
\edm
and the twisted derivative of $\Omega$ is
\bdm
d^J\Omega\  =\  \sum_{i<j<k}^n  C_{ij}^k \left[ 
a \, (x_iy_ {jk}+y_ix_jy_k+y_{ij}x_k) + 3 \frac{b^2}{a}\,  x_{ijk} - 2b\,  (x_{ij}y_k + x_iy_jx_k+y_ix_{jk} )\right].
\edm
 Thus the torsion of the characteristic connection $\nabla$ 
is given by
\bdm
T\ =\ N + d^J \Omega\ =\ \sum_{i<j<k}^n  C_{ij}^k \left[ 
\left( a + 2 \frac{b^2}{a} \right)  x_ {ijk} - 2b  y_{ijk} 
+ \frac{b^2}{a} (x_iy_{jk}+y_ix_jy_k+y_{ij}x_k) \right]. 
\edm
The connection forms of the characteristic connection are 
\bdm
\Lambda(x_i) = \left( a + \frac{b^2}{a} \right) \sum_{j<k}^nC^k_{ij} (x_{jk}+y_{jk}) 
\quad  \mbox{ and } \quad \Lambda(y_i)=0 \quad \text{for } \ i=1,\ldots, n.
\edm
If we define $H_i= \sum_{j<k}^n C_{ij}^k (x_{jk}+y_{jk})$, these elements are 
linearly independent, and the Jacobi identity on 
$G$ shows that $[H_i,H_j] = \sum_{k=1}^n C_{ij}^k H_k$. The holonomy
Lie algebra $\hol(\nabla)$ is spanned by \cite[Ch.X, 4.1]{Kobayashi&N2}
\bdm
\m_0\ :=\ \{ [\Lambda(X),\Lambda(Y)] - \Lambda([X,Y])\, 
:\, X, \, Y\in \g\ltimes_\ad \g\}
\edm
and its iterated commutators with  $\Lambda(\g\ltimes_\ad \g)$. 
The only contribution to $\m_0$ comes from the elements
\bdm
[\Lambda(x_i),\Lambda(x_j)] - \Lambda([x_i,x_j])\ =\ 
\frac{b^2}{a^2} (a^2+b^2) \sum_{k=1}^n C_{ij}^k H_k,
\edm
whose coefficient in front  cannot be zero, since $a$ and $b$ are 
assumed to be non vanishing.
Hence, the characteristic connection has holonomy algebra  $[\g,\g]$. 
Furthermore, the curvature tensor is a constant multiple of the projection 
on the holonomy algebra, namely
\bdm
\kr = \frac{b^2}{a^2} (a^2+b^2)\sum_{k=1}^n  H_k \otimes H_k.
\edm
This immediately yields that both the torsion form and the curvature tensor 
are parallel w.\,r.\,t.\,$\nabla$. It remains to prove that the almost complex 
structure is of type $\mathcal{W}_1\oplus \mathcal{W}_3$. We already observed that 
$N$ is a skew symmetric so we only need to show that the $\mathcal{W}_4$ component 
vanishes, i.e. $\delta^g\Omega =0$. We use the formula (see \cite{Agricola06}, 
for instance)
\be\label{delta-omega}
\delta^g \Omega = \delta^\nabla \Omega + \frac{1}{2} (e_i \haken e_j \haken T) 
\wedge (e_i \haken e_j \haken \Omega) 
\ee
for any orthonormal basis $\{e_1,\cdots, e_{2n}\}$ of $M$. Using our adapted 
basis $\{x_1,y_1,\cdots, x_n,y_n\}$ and the fact that $\nabla$ is the characteristic 
connection of $(M,g,J)$, then we get that $\delta^g \Omega =0$.
\end{proof}
\begin{NB}\label{NB.lifting}
%----------------------------
This is not the canonical symplectic structure on $T^\ast G$ (since $d\Omega \neq 0$).
Observe that the Hermitian form can also be written
\bdm
\Omega\ =\ - a \sum_{i=1}^n e_i \wedge f_i\ =\ - a \sum_{i=1}^n d^c_i\wedge d^v_i.
\edm
Thus, up to normalization the almost complex structure `rotates' complete lifts 
of vector fields into vertical lifts. This is similar in spirit to the almost complex structure already constructed in \cite{Dombrowski62}, which acts in a similar way on horizontal (with respect to any affine connection) and vertical lifts. 

However, the metric we constructed is not $g$-natural in any reasonable sense:
Given an affine connection $\nabla$ (usually the Levi-Civita connection of some 
Riemannian metric), recall that  a  $g$-natural metric is a metric on $TM$ for 
which  the vertical and $\nabla$-horizontal distribution are orthogonal and 
the metric coincides with the original metric on horizontal lifts. But the 
vector fields $y_i$ are not purely vertical, and there is no connection having 
the vector fields $x_i$ as horizontal lifts. 
\end{NB}
\begin{NB}\label{NB.b=0}
%------------------------
We excluded $b=0$ from the previous discussion; however, all formulas make sense, 
so we can investigate what happens in this limiting case. 
The value $b=0$ corresponds to the direct product metric, but on the
semidirect product Lie group, hence it is nevertheless only left-invariant, in accordance 
to Lemma \ref{lem.metric-not-bi-invariant}.  For $b=0$, we see that
$\hol(\nabla)=0$ and $\kr=0$, so the connection is flat. We will come back to 
this case in the next section.
\end{NB}
\begin{NB}
%------------
Let us look more closely at the case where $\g$ has non-trivial center $\mathfrak{z}$
of dimension $p>0$. We can assume that we chose our orthonormal basis such that
the first $p$ elements $d_1,\ldots,d_p$ span  $\mathfrak{z}$. Then 
all corresponding structure constants $C^k_{\alpha j}$ vanish, where 
$\alpha=1,\ldots,p$, and therefore $H_\alpha=0$ and 
\bdm
\mathrm{span}\,(e_\alpha, f_\beta \, |\, \alpha,\beta= 1,\ldots,p) \ =\
\ker T \ :=\ \{X\, \in \mathrm{Lie}(TG)\, |\, X\haken T=0\}.
\edm
Thus $T$ has a $2p$-dimensional kernel and the splitting theorem 
\cite[Thm 3.4]{Agricola&F&F14} implies that $TG$ is locally a Riemannian product with
vanishing torsion on one factor and torsion with trivial kernel on the other factor.
The case $\mathfrak{z}\neq \{0\}$ may therefore be reduced to lower dimensional
examples.
Furthermore, if $G$ is semisimple but not simple its Lie algebra $\g$ splits as the 
sum of $n$ ($n>1$) simple Lie algebras and clearly both the tangent group $TG$ and 
the torsion form $T$ will also split into $n$ summands, reducing again the problem 
to lower dimensional spaces. All in all, we can conclude that the interesting cases
are $G$ simple: After the space $M^6=T\SU(2)$ described before,
the next  examples are $M^{16}=T\SU(3)$ and $M^{28}=TG_2$.
\end{NB}
%
%
%----------------------------------------------------------------------------------
\subsection{The direct product $G \times \g$}\label{subsection.direct}
%-----------------------------------------------------------------------------------
%
We shall now prove that the metric $\beta$ constructed on $TG$ in Theorem 
\ref{thm.semidirect} is isometric to a left-invariant metric on the
direct product $G \times \g$, despite the fact that  it is not isomorphic
(as a Lie group) to $G \ltimes \g$ by Lemma \ref{lem.metric-not-bi-invariant}.

First, let us describe the relevant metric on the direct product $G\times \g$. 
We assume, as in
Theorem  \ref{thm.semidirect}, that $G$ is a connected Lie group of dimension $n$ 
with bi-invariant metric,  $\{e_1, \cdots, e_n\} $ an orthonormal basis of $\g$
with respect to this metric, and $C_{ij}^k$ the totally antisymmetric structure 
constants of $G$. 

We define a two-parameter family of left-invariant Riemannian metrics $\tilde \beta$ 
(depending again on $a,b\in \R, \ a\neq 0$) on $G\x\g$ by setting the 
following elements of $\g\oplus\g$ to be  orthonormal,
\bdm
x_i = (a e_i,0) \qquad \mbox{and} \qquad y_i= (b e_i, e_i) \quad i=1, \ldots, n.
\edm
As an inner product on $\g\oplus\g$, this coincides of course with the inner
product defined on $\g\ltimes_\ad\g$ in Theorem \ref{thm.semidirect}. The
case $b=0$ corresponds now to the bi-invariant direct product metric on $G\x\g$.
For completeness, let us state the bracket relations given by the direct product 
structure, 
\bdm
[x_i, x_j] = a \sum_{k=1}^n C_{ij}^k x_k, \quad [x_i, y_j] = b \sum_{k=1}^n C_{ij}^k x_k, 
\quad [y_i, y_j] =  \frac{b^2}{a} \sum_{k=1}^n C_{ij}^k x_k.
\edm
Vector fields on $G\ltimes \g$ and $G\x \g$
are defined by left translation from their respective Lie algebras (identified with
the tangent space at the neutral element), so they are 
not the same on the set $G\x\g$, because the group structures are different -- as 
can be seen from the differing commutator relations. 
However, the left translation operators starting from the neutral 
element $(e,0)$ on both groups coincide. To see this, start with any point 
$\mathcal{Y}:=(h,Y)$ and 
consider the left translation in $TG$ (denoted by $L^s$) and in $G\x\g$ (denoted by 
$L^d$) by any group element $\mathcal{X}:=(g,X)$.  The letter $L$ without index
denotes the usual left translation in $G$ respectively $\g$ (the actual formula on $\g$
does not matter for the purpose here). The definition of the group 
multiplication in $G\ltimes \g$ and $G\x \g$ is equivalent to
\bdm
L^s_{\mathcal{X}} (\mathcal{Y})\ =\ (L_{g}(h), L_X (\Ad_g Y )),\quad
L^d_{\mathcal{X}} (\mathcal{Y})\ =\ (L_{g}(h), L_X(Y)).
\edm
Therefore, for $\mathcal{Y}=(e,0)$, $\Ad_g \,0 = 0$ and hence 
$L^s_{\mathcal{X}}(e,0)$ and $L^d_{\mathcal{X}}(e,0)$ coincide, and one
easily checks that their differential at $(e,0)$ coincide as well.
Therefore, left translation by 
$\mathcal{X}$ maps the origin to the \emph{same} point in the set $G\x\g$ regardless 
which group structure we consider. In particular, there is a natural identification
of tangent spaces to $TG$ and $G\x \g$ at \emph{all} points, and the metrics 
$\beta$, $\tilde \beta$ coincide in each of these tangent spaces. Recall now that
a diffeomorphism $f: TG=G\ltimes \g \ra G\x \g$ is an isometry if
$\beta_{g,X}(U,V) = \tilde \beta_{f(g,X)} (df_{g,X} U, df_{g,X}V)$. We choose $f(g,X)=(g,X)$ 
the identity, hence $df=\Id$, and this becomes thus an isometry because of the 
identification of tangent spaces and metrics.
\begin{NB}
%-------------
 Within the set-up described above,
define an almost complex structure on $(G\x\g,\tilde \beta)$ by the two form 
\bdm 
\Omega \ =\  - (x_1\wedge y_1 + \cdots + x_n \wedge y_n).
\edm
It is a straightforward computation to check that, up to sign, the characteristic 
torsion of this almost complex structure is the same as the one described in the 
proof of Theorem \ref{thm.semidirect}. 
The connection forms of the characteristic connection $\nabla$ are expressed as 
\bdm
\Lambda(x_i) = - \frac{b^2}{a} \sum_{j<k}^n  C_{ij}^k (x_{jk}+y_{jk}) \qquad  \mbox{ and } 
\qquad \Lambda(y_i)= b \sum_{j<k}^n C_{ij}^k (x_{jk}+y_{jk}),
\edm
One checks that the curvature tensor is given by the same expression as the in the 
proof of Theorem \ref{thm.semidirect}.
A similar argument yields that both the torsion form and the curvature tensor are 
parallel with respect to $\nabla$.

Now, we had just seen before that the metrics on $TG$ and $G\x\g$ 
corresponding to $b=0$ are isometric (compare Remark \ref{NB.b=0}), and the connection
$\nabla$ is then flat. By a Theorem of Cartan and Schouten 
(\cite{Cartan&Sch26a}, see also 
\cite{Agricola&F10} for a modern proof), $G\ltimes\g$ with a flat metric connection
had to be isometric to a Lie group with a bi-invariant metric, so we could 
have anticipated the isometry at least in this special case.

Another argument to prove that $(G\ltimes \g, \beta)$ and $(G\times\g, \tilde\beta)$ 
are isometric is simply to use the Nomizu construction, see 
\cite[Appendix A]{Agricola&F&F14} or \cite{Tricerri&V1}, since the expressions for 
the torsion form and the curvature tensor are identical.
\end{NB}
%
%-------------------------------------------------------------
\section{A peculiar connection on $TS^7$}
%-------------------------------------------------------------
%
One might ask which are the crucial properties that make the construction of the metric 
connection in Section \ref{sec.main} work. Certainly, the Lie group approach is straightforward
and yields a complete description in the well-known formalism of Lie groups and
Lie algebras.

A different look at $G\x \g$ is to observe that both factors carry a remarkable
\emph{flat} metric connection --- the former one of the two flat Cartan-Schouten 
connections (for the biinvariant metric), the latter  the usual Levi-Civita connection
(for the standard euclidean metric). By the theorem of Cartan and Schouten mentioned 
before \cite{Cartan&Sch26a,Agricola&F10}, the only irreducible 
manifold carrying a flat metric connection $\nabla$ with skew torsion $T$ that is 
neither a  Lie group nor a vector space is the sphere $S^7$. Leaving flat vector spaces
aside, the crucial difference between a compact Lie group and $S^7$ lies
in the behaviour of the torsion: for Lie groups, $\nabla T=0$, while
on the $7$-sphere, the torsion $T$ fails to be  parallel \cite[p.484]{Agricola&F10}.

We shall sketch how
this point of view yields a remarkable connection on $TS^7$ and where differences appear.
First we summarize the construction of the flat metric connection with skew torsion 
on $S^7$.
In dimension $7$, the complex $\Spin(7)$-representation $\Delta^\C_7$
is the complexification of a real $8$-dimensional representation 
$\kappa: \, \Spin(7)\ra \End(\Delta_7)$, 
since the real Clifford algebra $\mathcal{C}(7)$ is isomorphic to
$\mathcal{M}(8)\oplus \mathcal{M}(8)$. Thus, we may identify  $\R^8$ with the vector space
$\Delta_7$ and embed therein the sphere $S^7$ as the set of all 
spinors of length one ($\langle\cdot,\cdot\rangle$ is the euclidean 
scalar product on 
$\Delta_7=\R^8$). Fix your favourite explicit realization of the
spin representation by skew matrices, 
$\kappa_i:=\kappa(e_i)\in\so(8)\subset \End(\R^8)$, $i=1,\ldots,7$. 
Define  vector fields $V_1,\ldots,V_7$  on $S^7$ by
\bdm
V_i(x)\ =\ \kappa_i \cdot x \text{ for }x\in S^7\subset \Delta_7.
\edm
From the antisymmetry of $\kappa_1,\ldots,\kappa_7$, one easily deduces
that the vector fields $V_1(x),\ldots, V_7(x) $ define a global orthonormal frame on $S^7$
consisting of Killing vector fields. The connection $\nabla$ on $S^7$ is then 
defined by the requirement $\nabla V_i(x)=0$.  This connection  is trivially 
flat and metric, and its torsion coefficients are given by ($i\neq j$)
\bdm
T(V_i,V_j,V_k)(x)\, =\, -\langle [V_i(x),V_j(x)],V_k\rangle\, =\,
-2\langle \kappa_i \kappa_jx,\kappa_k x\rangle\, =\,
2\langle \kappa_i\kappa_j \kappa_k x,x\rangle\, =:\, \tau_{ijk}(x) .
\edm
Of course, the coefficients $\tau_{ijk}(x)$ are not constant, reflecting that $T$
is not parallel.
The Killing orthonormal frame $V_1(x),\ldots, V_7(x)$  does \emph{not} form a Lie algebra;
rather, 
\be \label{eq:s7}
[V_i(x),V_j(x)]\, = \, [\kappa_i,\kappa_j](x)\, =\,  2\,\kappa_i\kappa_j x\, =\, 
- \sum_{k}  \tau_{ijk}(x) V_k(x) \ \ 
\text{ for } i\neq j.
\ee
The antisymmetric functions $\tau_{ijk}(x)$ replace (up to sign) 
the structure constants $C^k_{ij}$ of the Lie group approach. These commutators
$[V_i(x),V_j(x)]$ are, of course, again  Killing vector fields, but not of 
constant length; however, they span a Lie algebra isomorphic 
to $\spin(7)\subset\so(8)$. 

Now choose the frame $f_i:=\del/\del z_i$ with respect to standard euclidean 
coordinates $z_1,\ldots, z_7$ on $\R^7$, which is of course orthonormal for 
the euclidean scalar product. Formally, the vector fields $V_i$ and $f_i$ obey 
commutator relations similar to the direct product situation described in 
Subsection \ref{subsection.direct}. Introduce 
global vector fields on $S^7\x\R^7\ni (x,z)$ by
\bdm
X_i(x,z)\ =\ (aV_i(x), 0),\quad Y_i(x,z)\ =\ (b V_i(x), f_i), \quad i=1,\ldots,7 \quad (a,b \in \R, a \neq 0)
\edm
and define -- just as before  --  
a Riemannian metric $g$ on $S^7\x\R^7$ by requiring 
that  these are orthonormal, and an almost complex structure via the Hermitian form
$\Omega = - \sum X_i\wedge Y_i$. 
Similarly to the direct product case, the following bracket relations  hold
(we omit the base point $(x,z)$),
\bdm
[X_i, X_j] = - a \sum_{k=1}^n \tau_{ijk} X_k, \quad [X_i, Y_j] = - 
b \sum_{k=1}^n \tau_{ijk}\, X_k, 
\quad [Y_i, Y_j] =  -\frac{b^2}{a} \sum_{k=1}^n \tau_{ijk}\, X_k.
\edm
The Nijenhuis tensor of the almost hermitian structure is the skew-symmetric tensor 
\bdm
N \ = \left[ a- \frac{b^2}{a}  \right]\sum_{i<j<k} \tau_{ijk} 
(X_{ijk} - Y_{ij}X_k) 
+ 2b \sum_{i<j<k}\tau_{ijk} (X_{ij}Y_k - Y_{ijk}) 
\edm
and the twisted derivative of $\Omega$ is
\bdm
d^J\Omega = a \sum_{i<j<k} \tau_{ijk} X_k Y_ {ij} + 
3 \frac{b^2}{a} \sum_{i<j<k} \tau_{ijk} X_{ijk} -  2b \sum_{i<j<k} \tau_{ijk} X_{ij}Y_k.
\edm
Thus the characteristic torsion of this almost hermitian structure is given by
\bdm
\tilde T = N + d^J \Omega = \left[ a + 2 \frac{b^2}{a} \right]\sum_{i<j<k} 
\tau_{ijk}  X_ {ijk} 
- 2b\,  \sum_{i<j<k} \tau_{ijk} Y_{ijk} + \frac{b^2}{a} \sum_{i<j<k}
 \tau_{ijk} X_k Y_{ij}. 
\edm
Finally, the connection $\nabla = \nabla^g + \frac{1}{2}\tilde T$ on $S^7\x \R^7$ 
is given  by
\bdm
\nabla_{Z_i} X_j  =  \frac{b^2}{a} \sum_{k=1}^7 \tau_{ijk} Z_k 
 \qquad  \mbox{ and } \qquad 
\nabla_{Z_i} Y_j = - b  \sum_{k=1}^7 \tau_{ijk} Z_k, \ \  Z_i= X_i \text{ or } Y_i, \ \ 
i=1,\ldots,7.
\edm
If we define the $2$-forms $H_i= - \frac{1}{2}\sum_{j,k}\tau_{ijk} (X_{jk}+Y_{jk})$, we 
can summarize these identities as
\bdm
\nabla_{Z_i} X_j\ =\  \frac{b^2}{a}\, (Z_i\haken H_j),\quad
\nabla_{Z_i} Y_j\ =\  - b \, (Z_i\haken H_j).
\edm
As on $S^7$ itself, the torsion $\tilde T$ is not parallel, but 
$\nabla_X \tilde T(Y,Z,V)$ is antisymmetric in all arguments, hence the
curvature operator $\kr$ is a symmetric operator $\Lambda^2(TS^7)\ra\Lambda^2(TS^7)$
(see \cite[Remark 2.3]{Agricola06} for details on this curvature argument).  
In fact, one computes
\bdm
\kr = \frac{4 b^2}{a^2} (a^2+b^2) (H_1 \otimes H_1 + \cdots H_n \otimes H_n).
\edm
Together with the property (\ref{eq:s7}), this implies that $\nabla$ has holonomy $\spin(7)$.
%
%-------------------------------------------------------------
\appendix\section{The two-fold product $G\times G$}
%-------------------------------------------------------------
%
We now discuss briefly the compact case. The direct product $G\times G$  also has families of naturally reductive structures -- we 
sketch the construction to emphasize how this compares to the case described in
Section 2. This is an explicit version of the constructions from \cite{DAtri&Z79};
it  generalizes the $S^3\x S^3$ example given in 
\cite[Section 9.4]{Agricola&F&F14}, see case $(c)$ in  
Theorem \ref{thm.classification-dim-6}.
\begin{thm}
%----------------
Let $G$ be a connected compact Lie group with bi-invariant metric. The group $G\x G$ can 
be equipped with a five-parameter family of left invariant naturally reductive structures. 
More precisely, $G\x G$ can be endowed with a family of almost complex structures of 
type $\mathcal{W}_1\oplus\mathcal{W}_3$ such that its characteristic connection 
satisfies $\nabla T = \nabla \kr =0$ and $\hol(\nabla)$ is either $[\g,\g]$ or trivial. 
\end{thm}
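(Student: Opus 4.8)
The plan is to mirror, step by step, the explicit computation carried out in the proof of Theorem \ref{thm.semidirect}, the essential new feature being that \emph{both} summands of $\g\oplus\g=\mathrm{Lie}(G\times G)$ now carry the nontrivial bracket of $G$. First I would fix orthonormal bases $\{e_i\}$ and $\{f_i\}$ of the two copies of $\g$, each with the totally antisymmetric structure constants $C_{ij}^k$ of Remark \ref{NB.struct-constants}, so that $[e_i,e_j]=\sum_k C_{ij}^k e_k$, $[f_i,f_j]=\sum_k C_{ij}^k f_k$ and $[e_i,f_j]=0$. For the abstract existence of the family I would invoke the realization $G\times G\cong(G\times G\times G)/\Delta G$ together with the D'Atri--Ziller construction \cite{DAtri&Z79}: the $\Ad(\Delta G)$-invariant symmetric forms on the reductive complement, combined with the freedom in the bi-invariant background metric on $\g\oplus\g\oplus\g$ and on the diagonal, furnish the parameters, which the explicit frame below realizes as exactly five. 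The explicit description is the point, however, so I would \emph{define} the metric directly by declaring a five-parameter family of frames $\{x_i,y_i\}$ --- linear combinations of the $e_i$ and $f_i$ --- to be orthonormal, together with the almost complex structure $\Omega=-\sum_i x_i\wedge y_i$, exactly as in Theorem \ref{thm.semidirect}.

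The computational core then proceeds identically in spirit. I would first invert the frame to write the three bracket blocks $[x_i,x_j]$, $[x_i,y_j]$, $[y_i,y_j]$ each in the form $\sum_k C_{ij}^k$ times a parameter-dependent combination of $x_k$ and $y_k$; unlike the semidirect case, none of the three blocks vanishes. From these I would compute the Nijenhuis tensor $N$ (which I expect again to be totally skew) and the twisted derivative $d^J\Omega$, and set the characteristic torsion to be $T=N+d^J\Omega$. Next I would read off the connection forms $\Lambda(x_i),\Lambda(y_i)$ and form the $2$-forms $H_i=\sum_{j<k}C_{ij}^k(x_{jk}+y_{jk})$; the Jacobi identity on $G$ gives $[H_i,H_j]=\sum_k C_{ij}^k H_k$, so that the holonomy algebra $\hol(\nabla)$ --- generated by the elements $[\Lambda(X),\Lambda(Y)]-\Lambda([X,Y])$ as in \cite[Ch.\,X]{Kobayashi&N2} --- is spanned by the $H_i$ and equals $[\g,\g]$ precisely when the scalar coefficient in front of $\sum_k C_{ij}^k H_k$ is nonzero. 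At the special parameter values where this coefficient vanishes (the bi-invariant, flat Cartan--Schouten loci, in analogy with Remark \ref{NB.b=0}) the holonomy collapses to the trivial algebra, which is exactly the dichotomy asserted in the statement. As before, the curvature $\kr$ would come out as a constant multiple of the projection onto the holonomy, i.e.\ a multiple of $\sum_k H_k\otimes H_k$.

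With $T$ and $\kr$ both expressed through the $\nabla$-parallel objects $H_k$, the identities $\nabla T=\nabla\kr=0$ follow immediately, establishing the naturally reductive structure. For the Gray--Hervella type it remains to place $(g,J)$ in $\mathcal{W}_1\oplus\mathcal{W}_3$: skew-symmetry of $N$ rules out the $\mathcal{W}_2$ part, and I would kill the $\mathcal{W}_4$ part by showing $\delta^g\Omega=0$ through the coderivative formula (\ref{delta-omega}), exactly as at the end of the proof of Theorem \ref{thm.semidirect}. I expect the main obstacle to be organizational rather than conceptual: because all three bracket blocks are now nonzero, the intermediate expressions for $N$, for $d^J\Omega$, and for the holonomy bracket acquire cross terms mixing the two factors, so the bookkeeping --- in particular verifying that the $H_i$ close under the bracket and that the components outside $\mathcal{W}_1\oplus\mathcal{W}_3$ cancel uniformly across the full five-parameter range, including the degenerate loci where $\hol(\nabla)$ becomes trivial --- is considerably heavier than in the semidirect case.
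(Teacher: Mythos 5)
Your computational strategy would work, but be aware that it is genuinely different from the paper's proof, and one of your explicit claims is false as stated. The paper never computes with left-invariant frames on the group $G\x G$ itself: it realises $G\x G$ as the homogeneous space $K/L$ with $K=G\x G\x G$ and $L=\Delta G$, chooses the parameter-dependent reductive complements $\m=\m_1\op\m_2$ with $\m_1=\{(X,aX,bX)\}$, $\m_2=\{(Y,cY,dY)\}$ (reductive exactly when $(a-1)(d-1)-(b-1)(c-1)\neq 0$), equips $\m$ with the inner product $B(X_1,X_2)+\lambda^{-2}B(Y_1,Y_2)$, and obtains the $2$-forms $H_i$ as the isotropy representation of $\Delta\g$; the brackets of elements of $\m$ then have components along the isotropy algebra (the coefficients $\gamma,\tau,\theta$), a feature with no counterpart in your picture. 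That is the explicit D'Atri--Ziller construction, which you invoke only abstractly. Your route---declaring frames $x_i=p\,e_i+q\,f_i$, $y_i=r\,e_i+s\,f_i$ orthonormal in $\g\op\g$ and running the characteristic-connection computation of Theorem \ref{thm.semidirect}---is instead the analogue of what the paper does for $G\x\g$ in Section \ref{subsection.direct}; it is legitimate, because left translations make $G\x G$ homogeneous and the paper's definition of naturally reductive only requires $\nabla T=\nabla\kr=0$ for the characteristic connection, and by the identification-of-left-translations argument of Section \ref{subsection.direct} it reproduces the same tensors. What the paper's picture buys is that the homogeneous structure and all five parameters are visible from the outset; what yours buys is uniformity with Section 2, at the price of heavier bracket bookkeeping.

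The needed repairs are these. First, your frames cannot ``realize exactly five'' parameters: a frame with $i$-independent coefficients carries only the four constants $(p,q,r,s)$, and the pair $(g,\Omega)$ is moreover unchanged under the rotation $(x_i,y_i)\mapsto(\cos\theta\,x_i+\sin\theta\,y_i,\,-\sin\theta\,x_i+\cos\theta\,y_i)$. Indeed, pulling the paper's family back along $(g_1,g_2)\mapsto (g_1,g_2,e)\Delta G$ sends $(a,b,c,d,\lambda)$ to the frame with $(p,q,r,s)=(1-b,\ a-b,\ \lambda(1-d),\ \lambda(c-d))$, so the fifth parameter is absorbed at the level of left-invariant data; the count of five lives in the homogeneous parametrisation (choice of complement plus metric) and must be inherited from there, not from the frame. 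Second, your justification of $\nabla T=\nabla\kr=0$ is specious: the $H_k$ are \emph{not} $\nabla$-parallel (they rotate into one another, $[H_i,H_j]=\sum_k C_{ij}^k H_k$), and $T$ is not expressed through them. The correct argument is that the connection forms take values in $\mathrm{span}\{H_k\}\cong[\g,\g]$, which acts on $\mathrm{span}\{x_i\}\op\mathrm{span}\{y_i\}$ as the diagonal adjoint action, and both $T$ (whose coefficients are the $\ad$-invariant constants $C_{ij}^k$ of Remark \ref{NB.struct-constants}) and $\sum_k H_k\ox H_k$ (the Casimir element of that subalgebra) are invariant under this action---this is the check the paper records as $\Lambda T=\Lambda\kr=0$. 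Third, the trivial-holonomy locus is not ``the bi-invariant loci'': among the flat members are metrics with $g(e_i,f_i)\neq 0$, which for non-abelian $G$ cannot be bi-invariant; by the Cartan--Schouten theorem \cite{Cartan&Sch26a,Agricola&F10} they are merely \emph{isometric} to bi-invariant ones, which is exactly the subtlety already present in Remark \ref{NB.b=0}.
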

\begin{proof}
%---------------
We realise $G \x G$ as the homogeneous space $K/ L$, where 
$K= G\x G \x G$ and $L= \Delta G$ is  embedded into $K$ diagonally.
Let $\k = \g\oplus \g \oplus \g$ be the Lie algebra of $K$ and 
$\Delta \g = \{ (X, X, X): X \in \g \}$ be the Lie algebra of $\Delta G$. Consider the 
following  $\Delta\g$-modules
$$\m_1 = \{ (X, aX, bX): a, b \in \R, X \in \g \},\quad
\m_2 = \{ (Y, cY, dY): c, d \in \R, Y \in \g \},\quad
\m = \m_1\oplus \m_2.
$$
One checks that $\m$ is a 
reductive complement of $\Delta\g$ inside $\k$ if and only if 
$$
\Delta :=  (a-1) (d-1) - (b-1) (c-1) \neq 0.
$$
Let $B$ denote the negative Killing form on $\g$ and 
define an inner product on $\m$, for each parameter $\lambda >0$, as
$$\langle (X_1+Y_1, a X_1 + c Y_1, b X_1 + d Y_1), 
(X_2+Y_2, a X_2 + c Y_2, b X_2 + d Y_2) \rangle
= B(X_1, X_2)+ \frac{1}{\lambda^2}  B(Y_1,Y_2).$$ 
We define also an almost complex structure $J$ on $\m$ by 
$$J((X, a X, b X)+(Y, c Y, d Y) ) = -\frac{1}{\lambda} (Y, a Y, b Y) + \lambda (X, c X, d X).$$
Let $e_1,\ldots,e_n$  be an orthonormal basis of $\g$ with antisymmetric structure 
constants $C_{ij}^k$. Consider also the elements
$$x_i=(e_i, a e_i, b e_i) \quad \mbox{ and } \quad y_i = (e_i, c e_i, d e_i),\qquad 
i=1,\ldots,n. $$
The sets $\{x_i\}$ and $\{y_i\}$ are bases of $\m_1$ and $\m_2$, respectively. 
Remark that $J$ is given, in the basis $\{x_i, y_i\}$ of $\m$, by the 2-form 
$\Omega = -(x_1\wedge y_1+ \cdots + x_n\wedge y_n)$.
Finally, let $h_i=(e_i,e_i,e_i)$. The isotropy representation 
$\lambda: \h \longrightarrow \so(\m)$ is given by 
$$\lambda(h_i)=\sum_{j<k} C_{ij}^k (x_jx_k+y_jy_k):= H_i.
$$
The structure of $\so(\m)$ together with the Jacobi identity in $\g$ imply that
$[H_i, H_j] = \sum_{k=1}^n C_{ij}^k H_k$.
The commutator structure is somewhat complicated. For ease of notation we introduce 
the following coefficients
$$\begin{array}{ll}
\alpha = -\frac{2}{\Delta} ((a^2-1)(d-1) - (b^2-1)(c-1)) \qquad 
& \beta = -\frac{2}{\Delta} (b-1)(a-1)(b-a) \\ 
\gamma = -\frac{2}{\Delta} ( a(d-b^2)+ a^2(b-d)+ (b^2-b) c) &
 \delta = -\frac{2}{\Delta} ( c (a (d-1) - bd +1) + (b-1) d) \\ 
\sigma = \frac{2}{\Delta} ( (a-1)(1-bd) + (ac-1)(b-1) )
& \tau = \frac{2}{\Delta} ( ac (d-b) + cb (1-d) + ad (b-1) )\\
\xi = - \frac{2}{\Delta} (c-1)(d-1)(c-d) 
& \eta = -\frac{2}{\Delta} ((d^2-1) (a-1) - (c^2-1)(b-1))\\
\theta = -\frac{2}{\Delta} ( d^2(c-a) + c^2(b-d) + (da-cb))
\end{array}$$
Then we can write the nonvanishing brackets of elements of $\m$ as
$$
\begin{array}{ll}
[x_i, x_j] \, = \, \sum_{k=1}^n C_{ij}^k (\mu x_k + \frac{\nu}{\lambda}y_k+\gamma h_k), & 
[x_i, y_j] \, = \, \sum_{k=1}^n C_{ij}^k (\lambda\delta x_k + \sigma y_k+\lambda\tau h_k), \\
{[y_i, y_j]} \, = \, \sum_{k=1}^n C_{ij}^k (\lambda^2\xi x_k + \lambda\eta y_k+\lambda^2\theta h_k). & \\
\end{array}
$$
The Nijenhuis tensor $N$ is totally skew-symmetric and given by 
\bea[*]
N & = & \sum_{i<j<k} C_{ij}^k \bigg[[\lambda^2 \xi + 2 \sigma - \alpha] 
(x_{ijk}-(x_iy_{jk}+y_ix_jy_k+y_{ij}x_k)) + \\
  &&  + \left. \left[ \frac{\beta}{\lambda} + \lambda (2\delta - \eta) \right] (y_{ijk}-(y_ix_{jk}+x_iy_jx_k+x_{ij}y_k))\right].
\eea[*]
We can also compute that
\bea[*]
d^c\Omega &=& \sum_{i<j<k} C_{ij}^k \left[ -3 \lambda^2 \xi x_{ijk} - 3 \frac{\beta}{\lambda} y_{ijk} + (2\sigma - \alpha) (x_iy_{jk}+y_ix_jy_k+y_{ij}x_k)\right. \\
&& +  \lambda (2 \delta - \eta) (y_ix_{jk}+x_iy_jx_k+x_{ij}y_k)\bigg].
\eea[*]
Therefore the torsion tensor $T = N+ d^c\Omega$ is given by
$$\begin{array}{lcl}
T & = & \sum_{i<j<k} C_{ij}^k \left[ [-2 \lambda^2 \xi + 2 \sigma - \alpha] x_{ijk} 
+ \left[-2 \frac{\beta}{\lambda}+ \lambda (2 \delta - \eta)\right] y_{ijk}  
- \lambda^2 \xi (x_iy_{jk}+y_ix_jy_k+y_{ij}x_k) \right.\\ 
& & \left. - \frac{\beta}{\lambda} (y_ix_{jk}+x_iy_jx_k+x_{ij}y_k)\right].
\end{array}
$$
The characteristic connection is given by the map $\Lambda: \m \lra \so(\m)$
\bdm
\Lambda(x_i) = (-\lambda^2 \xi + \sigma ) H_i, \quad   
\Lambda(y_i) = \left( -\frac{\beta}{\lambda} + \lambda \delta \right) H_i,\quad
i=1,\ldots n.
\edm
It is then a straightforward computation to check that $\Lambda T = 0$. As for the 
curvature tensor we have that 
$$\kr = \Sigma ( H_1\otimes H_1+\cdots + H_n\otimes H_n), \ \  \text{with }\ 
\Sigma := \frac{\beta^2}{\lambda^2} + \lambda^4 \xi^2 - \lambda^2 \xi (2 \sigma 
- \alpha)- \beta (2 \delta - \eta).
$$
It is then also clear that $\Lambda \kr = 0$. 
Therefore we have a $5$-parameter family of naturally reductive spaces on $G\times G$. 
We have $\kr = 0$ if and only if $\Sigma=0$;  This includes some particular cases 
like $(c=1, b=1)$, $(a=1,d=1)$ and $(c=d, a=b)$. In this case, $\hol(\nabla)=0$. 
If $\Sigma\neq 0$ then $\hol(\nabla)=[\g,\g]$. For all parameters, $\delta^g\Omega =0$, 
so the almost Hermitian structure is of type $\mathcal{W}_1\oplus \mathcal{W}_3$.
\end{proof}
\bigskip
%------------------------------------------
    
%\vspace{2cm}
\end{document}